\definecolor{myorange}{RGB}{255, 160, 18}
\definecolor{mygreen}{RGB}{50, 200, 50}
\newcommand{\ZZ}{\mathbb{Z}}
\newcommand{\KK}{\mathbb{K}}
\newcommand{\set}[1]{\left\{ #1 \right\}}
\newcommand{\mset}[1]{\left\{ #1 \right\}}
\newcommand{\floor}[1]{\left\lfloor {#1} \right\rfloor}
\newcommand{\overbar}[1]{\mkern 1.5mu\overline{\mkern-1.5mu#1\mkern-1.5mu}\mkern 1.5mu} 
\renewcommand*\complement[1]{\overbar{#1}} 
\newcommand*\complete[1]{\tilde{#1}} 
\newcommand{\heightoperator}{\operatorname{height}}
\newcommand{\height}[1]{\heightoperator\left(#1\right)}
\newcommand{\cmi}[1]{\mathcal{C}\Big(#1;t_1,t_2\Big)} 
\newcommand{\cmg}[1]{\mathcal{C}\left(R(#1);t_1,t_2\right)} 
\newcommand{\om}[1]{\mathcal{C}\left(R/I_S;t_1,t_2\right)}
\newcommand{\minS}[1]{\operatorname{\mathcal{S}_{min}}(#1)}
\newcommand{\minh}{h_{\operatorname{min}}}
\newcommand{\leftovers}[2]{N(\complement{#1})}
\newcommand{\components}[1]{c(#1)}
\newtheorem{theorem}{Theorem}[section]
\newtheorem{proposition}[theorem]{Proposition}
\newtheorem{headthm}{Theorem}
\theoremstyle{definition}
\newtheorem{example}[theorem]{Example}
\newtheorem{notation}[theorem]{Notation}
\newtheorem{remark}[theorem]{Remark}
\begin{document}
\title{MULTIDEGREES OF BINOMIAL EDGE IDEALS}
\author[J. Cooper]{Jacob Cooper}
\email{jcoope39@asu.edu}
\author[E. Leventhal]{Ethan Leventhal}
\email{eylevent@asu.edu}
\address{School of Mathematical and Statistical Sciences, Arizona State University, P.O. Box 871804, Tempe, AZ 85287-18041}

\begin{abstract}
    Let $G$ be a simple graph with binomial edge ideal $J_G$. We prove how to calculate the multidegree of $J_G$ based on combinatorial properties of $G$. In particular, we study the set $\minS{G}$ defined as the collection of subsets of vertices whose prime ideals have minimum codimension. We provide results which assist in determining $\minS{G}$, then calculate $\minS{G}$ for star, horned complete, barbell, cycle, wheel, and friendship graphs, and use the main result of the paper to obtain the multidegrees of their binomial edge ideals.
\end{abstract}
\maketitle

\section{Introduction}\label{sec:intro}
Let $n \in \mathbb{N}$ and let $G$ be a simple, connected graph with vertex set $V(G) = [n] := \set{1,\ldots,n}$ and edge set $E(G)$. Let $\KK$ be a field and let $T$ be the polynomial ring $T := \mathbb{K}[x_1,\ldots,x_n,y_1,\ldots,y_n]$. The \textit{binomial edge ideal} of $G$ is defined as
$$J_G := (x_iy_j-x_jy_i : i<j \text{ and } \set{i,j} \in E(G)).$$
These ideals have been intensely studied during the past few decades, in particular because of their connection to the class of ideals of 2-minors of a generic $2 \times n$ matrix, as well as their connection to conditional independence \cite{bolognini,ene,herzog,kiani}. Similarly, the multidegree is a potent geometric invariant that has been used across domains of algebra, geometry, and even statistics \cite{caminata,castillo,herrmann,knutson, michalek}. We begin this section with the necessary notations to state the main result, then we include the statement of our main theorem, and finally we provide an outline of the rest of the paper.

\begin{notation}\label{not:minS}
    Let $G$ be a graph. For a subset $S\subseteq V(G)$, let $\complement{S}$ denote $V(G) \setminus S$ and consider the induced subgraph $G[\complement{S}]$. Let $\components{\complement{S}}$ be the number of connected components of $G[\complement{S}]$. We denote $\minh := \min_{S \in V(G)}\{|S|-c(\complement{S})\}$. Then we define
    $$\minS{G} := \set{S \subseteq V(G): |S| - c(\complement{S}) = \minh}.$$
\end{notation}

By labeling the connected components $G_i$ for $i = 1, \dots, c(\complement{S})$, we establish the notation $\leftovers{S}{G} := \bigl(|V(G_1)|, \dots, |V(G_{c(\complement{S})})|\bigr)$ as the multiset containing the number of vertices of each connected component. The following is our main theorem.
\begin{headthm}[\cref{thm:multidegree}] \label{thmA}
    The multidegree of the binomial edge ideal of a graph $G$ is
    $$\cmg{G} = \sum_{S\in\minS{G}}\left[(t_1t_2)^{|S|}\cdot\prod_{n\in\leftovers{S}{G}}\frac{t_1^n-t_2^n}{t_1-t_2}\right].$$
\end{headthm}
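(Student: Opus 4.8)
The plan is to compute $\cmg{G}$ by isolating the contribution of the top-dimensional (equivalently, minimal-codimension) minimal primes of $J_G$, using two external inputs. First, the multidegree is additive over the top-dimensional part: for any $\ZZ^2$-graded quotient $T/I$ with the grading $\deg x_i=(1,0)$, $\deg y_i=(0,1)$,
$$\mathcal{C}(T/I;t_1,t_2)=\sum_{P}\operatorname{mult}_I(P)\,\mathcal{C}(T/P;t_1,t_2),$$
where $P$ runs over the minimal primes of $I$ of minimal codimension and $\operatorname{mult}_I(P)=\operatorname{length}_{T_P}(T_P/I_P)$ (see \cite{castillo} and the references therein). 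Second, by the work of Herzog et al.\ \cite{herzog}, $J_G$ is radical and its minimal primes are exactly the ideals $P_S(G)=(x_i,y_i:i\in S)+J_{\widetilde{G[\complement{S}]}}$ as $S$ ranges over the cut sets of $G$, where $\widetilde{G[\complement{S}]}$ is obtained from $G[\complement{S}]$ by replacing each connected component with a clique on the same vertices; moreover $\operatorname{codim}P_S(G)=|V(G)|+|S|-\components{\complement{S}}$.

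Since $\operatorname{codim}P_S(G)=|V(G)|+\bigl(|S|-\components{\complement{S}}\bigr)$, the codimension is smallest precisely when $|S|-\components{\complement{S}}=\minh$, i.e.\ when $S\in\minS{G}$. I would then check that every such $S$ is in fact a cut set, so that $P_S(G)$ is genuinely a minimal prime: this is the combinatorial heart of the reduction. If some $i\in S$ failed the cut-vertex condition — that is, if adding $i$ back to $G[\complement{S}]$ either isolated it or attached it to a single component rather than merging two or more — then replacing $S$ by $S\setminus\{i\}$ would strictly decrease $|S|-\components{\complement{S}}$, contradicting minimality. Hence $\minS{G}$ consists of cut sets and indexes exactly the minimal-codimension minimal primes of $J_G$. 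Because $J_G$ is radical, $\operatorname{mult}_{J_G}(P_S)=1$ for each, so the additivity formula collapses to $\cmg{G}=\sum_{S\in\minS{G}}\mathcal{C}(T/P_S(G);t_1,t_2)$.

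It remains to compute each $\mathcal{C}(T/P_S(G);t_1,t_2)$. The generators of $P_S(G)$ split into blocks on pairwise disjoint variable sets: the pairs $\{x_i,y_i\}$ for $i\in S$, and, for each connected component $G_j$ of $G[\complement{S}]$, the determinantal ideal $J_{K_{n_j}}$ of the generic $2\times n_j$ matrix on the variables of $G_j$, where $n_j=|V(G_j)|$. Thus $T/P_S(G)$ is a tensor product over $\KK$ of the corresponding quotients; since Hilbert series multiply and the leading form of a product of power series is the product of the leading forms, the multidegree is multiplicative across the blocks. Each block $(x_i,y_i)$ is a complete intersection of forms of degrees $(1,0)$ and $(0,1)$, contributing $t_1t_2$, for a total factor $(t_1t_2)^{|S|}$. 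The base case $\mathcal{C}(T/J_{K_n};t_1,t_2)=\tfrac{t_1^n-t_2^n}{t_1-t_2}$ I would obtain from the $K$-polynomial: the $2\times 2$ minors form a Gröbner basis with squarefree initial ideal $(x_iy_j:i<j)$, so summing $t_1^{|\alpha|}t_2^{|\beta|}$ over the standard monomials $x^\alpha y^\beta$ (those with $\min\operatorname{supp}\alpha\ge\max\operatorname{supp}\beta$) gives the bigraded Hilbert series, hence the $K$-polynomial, and the substitution $t_i\mapsto 1-t_i$ followed by extracting the lowest-degree form returns $\tfrac{t_1^n-t_2^n}{t_1-t_2}$. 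Assembling the blocks gives $\mathcal{C}(T/P_S(G);t_1,t_2)=(t_1t_2)^{|S|}\prod_{n\in\leftovers{S}{G}}\tfrac{t_1^n-t_2^n}{t_1-t_2}$, and summing over $S\in\minS{G}$ yields the theorem.

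The hard part will be the careful bookkeeping in the non-standard bigrading $\deg x_i=(1,0)$, $\deg y_i=(0,1)$: I need the leading-form-of-a-product identity to hold so that the block decomposition transfers from $K$-polynomials to multidegrees, and I need the determinantal $K$-polynomial in exactly this bigrading rather than the usual single grading. The combinatorial lemma identifying $\minS{G}$ with the top-dimensional minimal primes is the other delicate point, although it reduces to the elementary component-counting estimate sketched above.
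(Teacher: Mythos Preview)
Your argument is correct and follows essentially the same architecture as the paper's proof: additivity of the multidegree over the top-dimensional primary components, multiplicativity across the tensor-product blocks of $T/P_S(G)$, and the explicit computation of the two building blocks $(t_1t_2)^{|S|}$ and $\tfrac{t_1^n-t_2^n}{t_1-t_2}$. Two places where you diverge from the paper are worth noting. First, the paper starts from the full decomposition $J_G=\bigcap_{S\subseteq V(G)}P_S(G)$ and simply restricts to the minimal-height terms, so it never needs your combinatorial lemma that every $S\in\minS{G}$ is a cut set; your argument for that lemma is correct, but it is an avoidable detour. Second, for $\cmg{K_n}$ the paper just invokes the determinantal multidegree formula of Bruns--Conca--Raicu--Varbaro, whereas you propose a direct Gr\"obner/Hilbert-series computation; your route is more self-contained but the cited formula gives the answer in one line.
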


This theorem makes the computation of the multidegree of $J_G$ very straightforward once $\minS{G}$ is known. 

We begin \cref{sec:background} with some background information on multidegrees. We define the ring of a graph $G$ to be $R(G) := T/J_G$ so that we can follow the notation of Miller and Sturmfels in \cite{miller} and denote the multidegree of $J_G$ with $\cmg{G}$ . The multidegree is a polynomial with integer coefficients, which can be interpreted in different ways. Geometrically, for example, its coefficients correspond to the number of intersection points between the variety determined by the ideal and a suitable product of general hyperplanes in a product of projective spaces; algebraically, these coefficients can be read from the top degree of the Hilbert polynomial of the ring \cite{castillo,vanderwaerden}.

The ideal $J_G$ is radical. In \cite{herzog},  Herzog, Hibi, Hreinsdottir, Kahle, and Rauh give a complete description of the prime decomposition of $J_G$; in \cref{sec:algebra}, we provide the algebraic tools that allow us to use this description to find multidegrees. In  \cref{sec:main}, we compute some foundational multidegrees that serve as `building blocks' for our final computations. Then we arrive at our main result, \cref{thm:multidegree}, which is a closed formula for the multidegree of $J_G$ that only involves combinatorial properties of the graph.

We provide some results to speed up the process of determining $\minS{G}$ and then spend \cref{sec:examples} demonstrating the usefulness of these results by calculating the multidegrees for six different families of graphs. Finally, \cref{sec:conclusion} concludes the paper with closing thoughts and ideas for further research.

\section{Background Information}\label{sec:background}
The multidegree was originally defined by van der Waerden in 1929 \cite{vanderwaerden}. It carries tremendous information about the algebraic and geometric properties of an ideal and its associated variety. This has made it a useful tool to approach multiple problems in algebra, geometry, and combinatorics \cite{caminata, castillo, herrmann, knutson, michalek}. Here we use the $\ZZ^2$-grading of $T$ induced by $x_i = (1,0)$ and $y_i = (0,1)$ that carries over the variables $t_1,t_2 = t_1,t_2$. We follow the notation from \cite{miller} and denote the Hilbert series of a $\ZZ^2$-graded module $M$ by $H(M;t_1,t_2)$. By \cite[Theorem 8.20]{miller}, there exists a \textit{$K$-polynomial} $\mathcal{K}(M;t_1,t_2)$ such that
$$H(M;t_1,t_2)  = \frac{\mathcal{K}(M;t_1,t_2)}{(1-t_1)^n(1-t_2)^n}.$$
We define the \textit{multidegree} of a $\ZZ^2$-graded module $M$ to be the sum $\mathcal{C}(M;t_1,t_2) \in \ZZ[t_1,t_2]$ of all terms in $\mathcal{K}(M;1-t_1,1-t_2)$  of smallest degree, i.e., those of total degree $\text{codim}(M):=\dim(T)-\dim(M)$. Therefore, one has $\mathcal{K}(T;t_1,t_2)=1$.
Despite the popularity of binomial edge ideals and the usefulness of the multidegree, to the best of or knowledge there is a lack of literature studying both together. Our paper aims to investigate connections between the multidegree of a binomial edge ideal and  the combinatorial information of its underlying graph. We note that a recent paper by Kumar and Sarkar studies the Hilbert series of decomposable graphs in the $\ZZ$-graded setting \cite{kumar}.

The final piece of introductory information, found in \cite{herzog}, is the computation of the prime decomposition of $J_G$. Recall that we use $\components{\complement{S}}$ to denote the number of connected components of $G[\complement{S}]$. Call these components $G_1,\ldots,G_{\components{\complement{S}}}$. For each $G_i$, let $\complete{G_i}$ be the complete graph on $V(G_i)$. Then for a subset $S \subseteq V(G)$ we define the prime ideal
$$P_S(G) := \left(\bigcup_{i\in S}\set{x_i,y_i},J_{\complete{G}_1},\ldots,J_{\complete{G}_{\components{\complement{S}}}}\right).$$
In \cite[Theorem 3.2]{herzog}, it is shown that $J_G = \bigcap_{S \subseteq V(G)}P_S(G)$. We expand on this theorem in the following sections to obtain a direct calculation of the multidegree of $J_G$.

\section{Relationships Between Ideals and Multidegrees}\label{sec:algebra}
In this section we include some preparatory results that simplify the computation of the multidegree of  $J_G$. Our first proposition narrows down which multidegrees we need for our calculations. It is proven in \cite{herzog} that the height (codimension) of $P_S(G)$ for a subset $S \subseteq V(G)$ is given by $\height{P_S(G)} = |S|+n-\components{\complement{S}}$ \cite[Lemma 3.1]{herzog}. This makes it clear that $\minS{G}$, defined in \cref{not:minS}, is the  collection of all subsets of $V(G)$ whose corresponding prime ideals have minimum height.

\begin{proposition}\label{prop:only min heights}
    For a graph $G$ we have
    $$\cmi{T/\bigcap_{S \subseteq V(G)}P_S(G)} = \cmi{T/\bigcap_{S\in\minS{G}}P_S(G)}.$$
    
   Moreover, if $I_1,\ldots,I_m$ are homogeneous ideals of $T$ with the same height, then
    
    \[\cmi{R/\bigcap_{i\in[m]} I_i} = \sum_{i\in[m]}\left(\cmi{R/I_i}\right).\] 
\end{proposition}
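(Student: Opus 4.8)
The plan is to build both identities from two ingredients: the additivity of $K$-polynomials along short exact sequences \cite[Theorem 8.20]{miller}, and the fact --- immediate from the definition of the multidegree as the lowest-degree part of $\mathcal{K}(M;1-t_1,1-t_2)$ --- that this lowest degree equals $\operatorname{codim}(M)$, so a module of strictly larger codimension contributes nothing in the relevant degree. The workhorse throughout is the Mayer--Vietoris short exact sequence
\[
0 \to T/(I\cap J) \to T/I \oplus T/J \to T/(I+J) \to 0,
\]
with maps $r \mapsto (r,r)$ and $(a,b)\mapsto a-b$, whose exactness is routine. Additivity then yields $\mathcal{K}(T/(I\cap J);t_1,t_2) = \mathcal{K}(T/I;t_1,t_2) + \mathcal{K}(T/J;t_1,t_2) - \mathcal{K}(T/(I+J);t_1,t_2)$.

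For the first identity I would split the full decomposition as $J_G = J_{\min}\cap J_{\mathrm{rest}}$, where $J_{\min} = \bigcap_{S\in\minS{G}}P_S(G)$ and $J_{\mathrm{rest}} = \bigcap_{S\notin\minS{G}}P_S(G)$. Since $\height{P_S(G)} = |S|+n-c(\complement{S}) = n + (|S|-c(\complement{S}))$ strictly exceeds the minimum value $h := n+\minh$ for every $S\notin\minS{G}$, the module $T/J_{\mathrm{rest}}$ has codimension larger than $h$; and because $V(J_{\min}+J_{\mathrm{rest}}) = V(J_{\min})\cap V(J_{\mathrm{rest}})$ sits inside $V(J_{\mathrm{rest}})$, the sum ideal has codimension $>h$ as well. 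Substituting $t_i \mapsto 1-t_i$ in the Mayer--Vietoris identity and extracting the degree-$h$ part, the two ``error'' terms vanish, leaving $\cmi{T/J_G} = \cmi{T/J_{\min}}$.

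For the second identity I would induct on $m$, the case $m=1$ being trivial. Writing $I = \bigcap_{i<m}I_i$ and $J = I_m$, the intersection $I$ again has height $h$ (one has $V(I)=\bigcup_{i<m}V(I_i)$, so $\height{I}=\min_{i<m}\height{I_i}=h$), so the same identity applies. The degree-$h$ parts of $\mathcal{K}(T/I;1-t_1,1-t_2)$ and $\mathcal{K}(T/J;1-t_1,1-t_2)$ are $\cmi{T/I}$ and $\cmi{T/I_m}$, and the inductive hypothesis expands the former as $\sum_{i<m}\cmi{T/I_i}$; these combine to the claimed sum provided the remaining term $\mathcal{K}(T/(I+J);1-t_1,1-t_2)$ has no degree-$h$ contribution.

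That proviso is the main obstacle, and it is precisely where the hypotheses enter: one needs $\height{I+J} > h$. This holds in the intended application, where the ideals are the distinct primes $P_S(G)$ of common height $h$: for distinct primes of the same height, $V(P_{S})\cap V(P_{S'})$ is a proper closed subset of the irreducible variety $V(P_S)$ and hence has codimension $>h$, and this propagates to $V(I)\cap V(J)=\bigcup_{i<m}\big(V(I_i)\cap V(I_m)\big)$. I would therefore prove the lemma for distinct primes (or, equivalently, under the added hypothesis $\height{I_i + \bigcap_{j\ne i}I_j} > h$) and verify the codimension inequality with care, since --- as the example $I_1=I_2=(x)$ shows, where the sum ideal keeps height $1$ --- additivity genuinely fails without it.
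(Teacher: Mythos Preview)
Your argument is correct and your diagnosis of the second claim is accurate: as literally stated (``homogeneous ideals of the same height'') the additivity formula is false, and your example $I_1=I_2=(x)$ nails the failure. The paper's own proof is a one-line citation to \cite[Definition~8.43, Theorem~8.53, Corollary~8.54]{miller}, and those results are formulated in terms of the minimal primes of maximal dimension of a module, counted with multiplicity; applied to $T/\bigcap_i I_i$ when the $I_i$ are the \emph{distinct} primes $P_S(G)$, they give exactly what is needed. So both the paper and you end up proving the statement under the implicit hypothesis ``distinct primes'' (or your equivalent codimension condition on the cross-sums), which is all that \cref{thm:multidegree} uses.

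Methodologically the two routes differ. The paper invokes Miller--Sturmfels' general additivity theorem, which already packages the passage from $K$-polynomial to multidegree and the fact that only top-dimensional components survive; this is quick but opaque. Your Mayer--Vietoris argument is essentially a direct unpacking of that machinery in this special case: it trades a black-box citation for an explicit short exact sequence and a concrete check that the ``error'' terms live in strictly higher codimension. The payoff of your approach is that it makes transparent \emph{why} the extra hypothesis is needed --- the sum ideal $I+J$ must have height $>h$ --- whereas the citation hides this inside the multiplicity formalism. Either way, for the first identity your splitting $J_G=J_{\min}\cap J_{\mathrm{rest}}$ and the codimension bounds on $T/J_{\mathrm{rest}}$ and $T/(J_{\min}+J_{\mathrm{rest}})$ are exactly right.
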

\begin{proof}
    These follow from \cite[Definition 8.43, Theorem 8.53, and Corollary 8.54]{miller}. 
\end{proof}

In other words, the only prime ideals that contribute to the multidegree of $J_G$ are those with minimum height. Additionally, this allows us to work with the sum of multidegrees instead of an intersection of ideals.

Next, we present a similar proposition for the sum of ideals.

\begin{proposition}\label{prop:product of multidegrees}
    Let $I_1,\ldots, I_n$ be homogeneous ideals of $T$ such that for distinct $i$ and $j$, the variables used in a minimal set of generators of $I_i$ are not used in that of $I_j$. Moreover, assume all the variables in $T$ are used in the ideals $I_i$. Let $T_i$ be the polynomial ring in the variables involved in the generators of $I_i$. Then
    $$\mathcal{C}\left(T/\sum_{i\in[n]} I_i;t_1,t_2\right) = \prod_{i\in[n]}(\mathcal{C}(T_i/(I_i\cap T_i);t_1,t_2)).$$
\end{proposition}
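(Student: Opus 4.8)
The plan is to reduce the statement to the multiplicativity of the Hilbert series under tensor products and then transfer that multiplicativity to the multidegree via the definition given in \cref{sec:background}. First I would exploit the hypotheses to realize $T$ as a tensor product. Because the generators of distinct $I_i$ involve disjoint sets of variables and, together, these sets exhaust all the variables of $T$, there is an isomorphism of $\ZZ^2$-graded $\KK$-algebras $T \cong T_1 \otimes_{\KK} \cdots \otimes_{\KK} T_n$. Setting $J_i := I_i \cap T_i$, I would check that $J_i$ is precisely the ideal of $T_i$ generated by the given generators of $I_i$: since $T$ is free as a $T_i$-module on the monomials in the complementary variables, we have $I_i = J_i T$, and intersecting back with $T_i$ recovers $J_i$. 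The standard behavior of tensor products of $\KK$-algebras under passing to quotients then yields $T/\sum_{i\in[n]} I_i \cong \bigotimes_{i\in[n]} (T_i/J_i)$.

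Next I would use that Hilbert series are multiplicative over such tensor products. For $\ZZ^2$-graded modules $M$ and $N$ in disjoint variables, the graded piece $(M\otimes_{\KK} N)_{(a,b)}$ is the direct sum of the $M_{(a_1,b_1)}\otimes_{\KK} N_{(a_2,b_2)}$ over $(a_1,b_1)+(a_2,b_2)=(a,b)$, so dimensions convolve and $H(M\otimes_{\KK} N;t_1,t_2)=H(M;t_1,t_2)\,H(N;t_1,t_2)$. Applying this to the decomposition above gives $H(T/\sum_i I_i;t_1,t_2)=\prod_i H(T_i/J_i;t_1,t_2)$. Writing each factor with its own $K$-polynomial over the denominator $(1-t_1)^{a_i}(1-t_2)^{b_i}$, where $a_i$ and $b_i$ count the $x$- and $y$-variables of $T_i$, the hypothesis that all variables are used forces $\sum_i a_i=\sum_i b_i=n$. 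Hence the denominators multiply to the standard form $(1-t_1)^n(1-t_2)^n$, and by uniqueness of the numerator over this fixed denominator I conclude $\mathcal{K}(T/\sum_i I_i;t_1,t_2)=\prod_i \mathcal{K}(T_i/J_i;t_1,t_2)$.

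Finally I would pass from $K$-polynomials to multidegrees. Substituting $t_j\mapsto 1-t_j$ preserves the product, so $\mathcal{K}(T/\sum_i I_i;1-t_1,1-t_2)=\prod_i \mathcal{K}(T_i/J_i;1-t_1,1-t_2)$. The multidegree is the homogeneous component of smallest total degree of this polynomial, and the smallest-degree component of a product of polynomials in $\ZZ[t_1,t_2]$ is the product of the smallest-degree components of the factors. This delivers exactly $\cmi{T/\sum_{i\in[n]} I_i}=\prod_{i\in[n]}\cmi{T_i/(I_i\cap T_i)}$, as claimed.

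I expect the main obstacle to be this last step, namely knowing that the minimal-degree terms do not cancel upon multiplication. This is where the integral-domain structure of $\ZZ[t_1,t_2]$ is essential: the product of the nonzero homogeneous minimal terms of the factors is again nonzero, of the expected degree $\sum_i \operatorname{codim}(T_i/J_i)$, so no lower-order cancellation can occur. A secondary point requiring care is the denominator bookkeeping in the second step, which combines into the standard form only because the variable sets partition all $2n$ variables of $T$; this is exactly the role of the assumption that every variable is used.
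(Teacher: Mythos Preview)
Your proposal is correct and follows essentially the same route as the paper: establish the graded isomorphism $T/\sum_i I_i \cong \bigotimes_i T_i/(I_i\cap T_i)$, deduce that the $K$-polynomials multiply, and conclude for the multidegrees. The paper compresses the last two steps into ``the result follows,'' so your version is in fact a more careful fleshing-out of the same argument, including the useful observation that $\ZZ[t_1,t_2]$ being a domain guarantees no cancellation among the lowest-degree terms.
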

\begin{proof}
    By assumption one has a graded isomorphism
    $$\frac{T}{\sum_{i\in[n]} I_i}\cong \frac{T_1}{I_1\cap T_1} \otimes_\KK\cdots\otimes_\KK \frac{T_n}{I_n\cap T_n}.$$
    Therefore 
    $$H\Big(T/\sum_{i\in[n]}I_i;t_1,t_2\Big)
	=\frac{\prod_{i\in[n]}\mathcal{K}(T_i/(I_i\cap
        T_i);t_1,t_2)}{(1-t_1)^{n}(1-t_2)^n}$$
    and the result follows.
\end{proof}

The following proposition computes the multidegree of ideals of variables.

\begin{proposition}\label{prop:other}
    For a graph $G$ and subset of vertices $S$,
    $$\mathcal{C}(T/\bigcup_{i \in S}\set{x_i,y_i};t_1,t_2) = (t_1t_2)^{|S|}.$$ 
\end{proposition}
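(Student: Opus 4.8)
The plan is to compute the Hilbert series of the quotient directly and then read off the multidegree from its $K$-polynomial. Write $I_S$ for the ideal generated by the variables $\set{x_i,y_i : i \in S}$. The first step is to observe that killing these variables simply leaves a polynomial ring in the remaining ones: as a $\ZZ^2$-graded $\KK$-algebra,
$$T/I_S \;\cong\; \KK[x_j,y_j : j \notin S],$$
where the grading on the right is inherited from $T$, so each surviving $x_j$ has degree $(1,0)$ and each surviving $y_j$ has degree $(0,1)$. This isomorphism is the crux of the argument, and it is immediate because $I_S$ is a monomial ideal generated by a subset of the variables.

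From this identification the Hilbert series factors as a product of geometric series over the $2(n-|S|)$ surviving variables, giving
$$H\bigl(T/I_S;t_1,t_2\bigr) = \frac{1}{(1-t_1)^{n-|S|}(1-t_2)^{n-|S|}}.$$
To match the denominator convention $(1-t_1)^n(1-t_2)^n$ fixed in \cref{sec:background}, I would multiply numerator and denominator by $(1-t_1)^{|S|}(1-t_2)^{|S|}$, which exhibits the $K$-polynomial as $\mathcal{K}(T/I_S;t_1,t_2) = (1-t_1)^{|S|}(1-t_2)^{|S|}$.

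The final step applies the definition of the multidegree: substitute $t_i \mapsto 1-t_i$ into the $K$-polynomial to obtain $\mathcal{K}(T/I_S;1-t_1,1-t_2) = (t_1t_2)^{|S|}$, and then retain only the terms of smallest total degree. Here there is nothing to discard, since $(t_1t_2)^{|S|}$ is already homogeneous of total degree $2|S|$. It remains only to confirm that this degree equals $\operatorname{codim}(T/I_S)$: using the isomorphism above to compute the dimension of the quotient, one has $\dim T - \dim(T/I_S) = 2n - 2(n-|S|) = 2|S|$. Hence the entire expression $(t_1t_2)^{|S|}$ is the multidegree, as claimed.

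I do not anticipate a genuine obstacle; the only points requiring care are the bookkeeping around the substitution convention and the verification that the resulting monomial already lies in the correct degree, so that no truncation occurs. One could alternatively derive the result from \cref{prop:product of multidegrees} by peeling off one index of $S$ at a time, but the direct Hilbert-series computation is cleaner and sidesteps that proposition's hypothesis that every variable of $T$ appears among the generators.
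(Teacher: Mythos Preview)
Your argument is correct and follows essentially the same route as the paper: both compute the Hilbert series of $T/I_S$ directly, rewrite it over the common denominator $(1-t_1)^n(1-t_2)^n$ to read off $\mathcal{K}(T/I_S;t_1,t_2)=(1-t_1)^{|S|}(1-t_2)^{|S|}$, and then substitute to obtain $(t_1t_2)^{|S|}$. Your version simply spells out the isomorphism, the substitution, and the codimension check that the paper leaves implicit in its one-line ``the result clearly follows.''
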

\begin{proof}
   Note that 
   $$H(T/\bigcup_{i \in S}\set{x_i,y_i});t_1,t_2)
   	=\frac{1}{(1-t_1)^{n-|S|}(1-t_2)^{n-|S|}}
        =\frac{(1-t_1)^{|S|}(1-t_2)^{|S|}}{(1-t_1)^{n}(1-t_2)^n}.$$ 
   The result clearly follows.
\end{proof}

\begin{remark}\label{rem:only connected}
    Let $G_1$ and $G_2$ be two disjoint graphs and suppose $H$ is their union. Then by \cref{prop:product of multidegrees}, $\cmg{H} = \cmg{G_1}\cdot\cmg{G_2}$, meaning the multidegree of the binomial edge ideal of the union of two disjoint graphs is the product of their multidegrees. For this reason, we assume all graphs are connected graphs for the rest of the paper.
\end{remark}

\section{Computing the Multidegree of the Binomial Edge Ideal of a Graph}\label{sec:main}
In \cref{sec:algebra} we showed how the prime decomposition of $J_G$ can be used to find the multidegree $\cmg{G}$. We see from the definition that each $P_S(G)$ is constructed from two types of ideals, so in this section we first obtain the two corresponding multidegrees.

The first multidegree we need is that of an ideal of the form $\bigcup_{i \in S}\set{x_i,y_i}$, which we showed in \cref{prop:product of multidegrees} to be
$$\mathcal{C}(T/\bigcup_{i \in S}\set{x_i,y_i};t_1,t_2) = (t_1t_2)^{|S|}.$$

The other multidegree that we need is the multidegree of the binomial edge ideal of a complete graph. For this we turn to a theorem proved by Bruns, Conca, Raicu, and Varbaro in \cite{bruns}: Let $\mathbb{K}$ be a field, $X$ be the $n \times m$ matrix of variables and $2\leq t\leq\text{min}(n,m)$. For a subset $L$ of elements of a ring, define $h_v(L)$ to be the sum of all power products of elements of $L$ whose exponents sum to $v$. Let $I_t$ be the ideal generated by $t$-minors of $X$. Then the multidegree of the ring $R/I_t(X)$ with respect to the $\ZZ^m$-graded structure given by columns is $$\cmi{R/I_t(X)}=(t_1,\ldots,t_m)^{t-2}\det(h_{\ell-i+j}(t_1,\ldots,t_m))_{i,j=1,2,\ldots,t-1},$$ where $\ell=n+1-t$ \cite[Theorem 4.6.8]{bruns}. Therefore in our setting we obtain the following proposition.

\begin{proposition}\label{prop:complete}
    The multidegree of the binomial edge ideal of a complete graph $K_n$ is
    $$\cmg{K_n} = \frac{t_1^n-t_2^n}{t_1-t_2}.$$
\end{proposition}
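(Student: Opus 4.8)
The plan is to specialize the Bruns--Conca--Raicu--Varbaro formula to the case $t=2$ and recognize that the binomial edge ideal of $K_n$ is exactly the ideal of $2$-minors of a generic $2\times n$ matrix. First I would set up the correspondence: for the complete graph $K_n$, every pair $\{i,j\}$ with $i<j$ is an edge, so $J_{K_n}$ is generated by all binomials $x_iy_j - x_jy_i$, which are precisely the $2\times 2$ minors of the matrix
$$X = \begin{pmatrix} x_1 & x_2 & \cdots & x_n \\ y_1 & y_2 & \cdots & y_n \end{pmatrix}.$$
Thus $J_{K_n} = I_2(X)$ where $X$ is the $2\times n$ matrix of variables. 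The $\ZZ^2$-grading we use on $T$ (with $x_i=(1,0)$ and $y_i=(0,1)$) corresponds, under the transpose, to the grading by columns of the $n\times 2$ matrix in the cited theorem, so I must make sure to apply the formula with the row and column roles matched appropriately; here the relevant minor size is $t=2$ and there are two ``column variables'' $t_1, t_2$.

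Next I would substitute the parameters into the determinantal formula. With $t=2$, the factor $(t_1,\ldots,t_m)^{t-2}$ becomes $(t_1t_2)^{0}=1$, and the determinant $\det(h_{\ell-i+j}(t_1,t_2))_{i,j=1,\ldots,t-1}$ collapses to a $1\times 1$ determinant, namely the single entry $h_{\ell}(t_1,t_2)$ with $i=j=1$. Here $\ell = n+1-t = n-1$ (taking the matrix to have $n$ rows and $2$ columns so that the formula's $n$ is our $n$ and its $m$ is $2$). So the multidegree reduces to $h_{n-1}(t_1,t_2)$, the complete homogeneous symmetric polynomial of degree $n-1$ in the two variables $t_1,t_2$, i.e. the sum of all monomials $t_1^a t_2^b$ with $a+b=n-1$.

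Finally I would identify this complete homogeneous symmetric polynomial with the claimed closed form. The standard identity
$$h_{n-1}(t_1,t_2) = \sum_{a=0}^{n-1} t_1^{a}t_2^{\,n-1-a} = \frac{t_1^n - t_2^n}{t_1 - t_2}$$
finishes the proof; this is just the factorization of $t_1^n-t_2^n$, equivalently the geometric-series identity, and requires only a one-line verification. The main obstacle I anticipate is bookkeeping rather than genuine difficulty: I must carefully match the conventions of the cited theorem---which variable set indexes the grading (columns versus rows), which dimension plays the role of $n$ and which of $m$, and hence the correct value of $\ell$---so that the index of the surviving $h_v$ term comes out to $n-1$ and not something off by one. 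Once that identification is pinned down, the computation is immediate.
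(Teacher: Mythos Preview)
Your proposal is correct and follows essentially the same route as the paper: identify $J_{K_n}$ with the ideal of $2$-minors of a generic $2\times n$ matrix, apply the Bruns--Conca--Raicu--Varbaro formula with $t=2$ and $\ell=n-1$, and recognize the resulting $1\times 1$ determinant as $h_{n-1}(t_1,t_2)=\frac{t_1^n-t_2^n}{t_1-t_2}$. The paper's proof is slightly terser but otherwise identical in content.
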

\begin{proof}
    The binomial edge ideal of a complete graph on $n$ vertices is simply the ideal generated by the $2$-minors of a $2 \times n$ matrix of variables. This is an application of \cite[Theorem 4.6.8]{bruns} where $n$ is the order of the complete graph and $m=2$. Following the theorem, we calculate $t=2$ and $\ell=n+1-2=n-1$. Then the multidegree is
    \begin{align*}
	\cmg{K_n}
	= \text{det}(h_{n-1}(t_1,t_2)) 
	= \sum_{\substack{i+j=n-1 \\ i,j \geq 0}}t_1^it_2^j 
	= \frac{t_1^n-t_2^n}{t_1-t_2}.
    \end{align*}
\end{proof}

Recall that to compute $J_G$, we need to find $J_{\complete{G}_1}, \ldots, J_{\complete{G}_{c(\complement{S})}}$. Since $\complete{G}_1, \ldots, \complete{G}_{c(\complement{S})}$ are complete graphs, we can use \cref{prop:complete} to find their multidegrees very easily once we know the orders of these complete graphs, i.e., the number of vertices in the connected components of $G[\complement{S}]$. This motivates our notation $\leftovers{S}{G} := \bigl(|V(G_1)|, \dots, |V(G_{c(\complement{S})})|\bigr)$.

Now we present the main result of the paper, a formula to combinatorially compute the multidegree of the binomial edge ideal of a graph $G$. Since it relies on determining $\minS{G}$, we follow the theorem with a discussion of shortcuts for finding $\minS{G}$.

\begin{theorem}\label{thm:multidegree}
    For a graph $G$, the multidegree of $J_G$ is
    $$\cmg{G} = \sum_{S\in\minS{G}}\left[(t_1t_2)^{|S|}\cdot\prod_{n\in\leftovers{S}{G}}\frac{t_1^n-t_2^n}{t_1-t_2}\right].$$
\end{theorem}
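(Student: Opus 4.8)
The plan is to read the formula off the prime decomposition $J_G = \bigcap_{S \subseteq V(G)} P_S(G)$ of \cite{herzog}, feeding it through the building blocks established above. Since $R(G) = T/J_G$, the prime decomposition gives $\cmg{G} = \cmi{T/\bigcap_{S \subseteq V(G)} P_S(G)}$, and the first part of \cref{prop:only min heights} immediately discards every prime of non-minimal height:
$$\cmg{G} = \cmi{T/\bigcap_{S \subseteq V(G)} P_S(G)} = \cmi{T/\bigcap_{S \in \minS{G}} P_S(G)}.$$
By the height formula $\height{P_S(G)} = |S| + n - \components{\complement{S}}$ of \cite{herzog}, every $S \in \minS{G}$ satisfies $\height{P_S(G)} = \minh + n$, so all of the primes surviving this reduction share a common height. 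That is exactly the hypothesis needed to apply the second part of \cref{prop:only min heights}, turning the intersection into a sum:
$$\cmi{T/\bigcap_{S \in \minS{G}} P_S(G)} = \sum_{S \in \minS{G}} \cmi{T/P_S(G)}.$$
To be sure this sum does not overcount, I would first note that distinct subsets give distinct primes: since a binomial edge ideal of a complete graph contains no single variable, one has $x_i \in P_S(G)$ if and only if $i \in S$, so $S$ is recoverable from $P_S(G)$.

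It then remains to compute $\cmi{T/P_S(G)}$ for a fixed $S \in \minS{G}$. Writing out
$$P_S(G) = \left(\bigcup_{i \in S}\set{x_i,y_i}, J_{\complete{G}_1}, \ldots, J_{\complete{G}_{\components{\complement{S}}}}\right),$$
I would use that the connected components of $G[\complement{S}]$ partition $\complement{S}$, so that $V(G) = S \sqcup V(G_1) \sqcup \cdots \sqcup V(G_{\components{\complement{S}}})$ and the variable ideal $\bigcup_{i\in S}\set{x_i,y_i}$ together with the ideals $J_{\complete{G}_j}$ involve pairwise disjoint sets of variables that exhaust all of $T$. This is precisely the situation of \cref{prop:product of multidegrees}, which factors the multidegree as
$$\cmi{T/P_S(G)} = \cmi{T_0/\bigcup_{i\in S}\set{x_i,y_i}} \cdot \prod_{j=1}^{\components{\complement{S}}} \cmg{\complete{G}_j},$$
where $T_0$ is the polynomial ring on the variables indexed by $S$. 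The first factor equals $(t_1 t_2)^{|S|}$ by \cref{prop:other}, while \cref{prop:complete} evaluates each $\cmg{\complete{G}_j} = \cmg{K_{n_j}} = \tfrac{t_1^{n_j}-t_2^{n_j}}{t_1-t_2}$ with $n_j := |V(G_j)|$. As the multiset $\set{n_1, \ldots, n_{\components{\complement{S}}}}$ is by definition $\leftovers{S}{G}$, the product is exactly the bracketed term in the statement, and summing over $S \in \minS{G}$ completes the argument.

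Because all of the conceptual weight has already been absorbed into the preceding propositions, what remains is essentially bookkeeping, and I expect the one genuinely delicate point to be the application of \cref{prop:product of multidegrees} when $G[\complement{S}]$ has isolated vertices. For such a vertex $v$ the completed component is $K_1$, whose binomial edge ideal is $0$; this ideal then contributes none of the variables $x_v, y_v$, so to make the variable partition genuinely exhaust $T$ I would assign $x_v, y_v$ by hand to the factor $T_j = \KK[x_v,y_v]$ and check that the tensor-product isomorphism underpinning \cref{prop:product of multidegrees} still holds. With this convention the corresponding factor is the harmless $\cmg{K_1} = \tfrac{t_1-t_2}{t_1-t_2} = 1$ predicted by \cref{prop:complete}, so the formula is unaffected. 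The remaining hypotheses --- equal heights and distinctness of the surviving primes --- are immediate from the height formula and the observation above.
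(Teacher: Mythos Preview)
Your proposal is correct and follows essentially the same chain of equalities as the paper's proof: prime decomposition from \cite{herzog}, reduction to $\minS{G}$ and additivity via \cref{prop:only min heights}, factorization via \cref{prop:product of multidegrees}, and evaluation of the factors by \cref{prop:other} and \cref{prop:complete}. Your added remarks on distinctness of the primes $P_S(G)$ and on the isolated-vertex edge case are valid refinements that the paper leaves implicit.
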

\begin{proof}
    Let $G$ be a simple, connected graph. We have the following chain of equalities:
    \begin{align*}
        &\cmg{G} \\
        &= \cmi{T/J_G} &(\text{By definition}) \\
        &= \cmi{T/\bigcap_{S \subseteq V(G)}P_S(G)} &\text{\cite[Theorem 3.2]{herzog}} \\
        &= \cmi{T/\bigcap_{S \in \minS{G}}P_S(G)} &(\text{\cref{prop:only min heights}}) \\
        &= \sum_{S \in \minS{G}}\cmi{T/P_S(G)} &(\text{\cref{prop:only min heights}}) \\
        &= \sum_{S \in \minS{G}}\cmi{ T/\Big(\bigcup_{i\in S}\set{x_i,y_i},J_{\complete{G}_1},\ldots,J_{\complete{G}_{\components{\complement{S}}}}\Big) } &(\text{By definition}) \\
        &= \sum_{S \in \minS{G}}\left[\cmi{T/ \bigcup_{i\in S}\set{x_i,y_i}} \cdot \prod_{i=1,\ldots,\components{\complement{S}}}\left(\cmg{\complete{G}_i}\right)\right] &(\text{\cref{prop:product of multidegrees}}) \\
        &= \sum_{S\in\minS{G}}\left[(t_1t_2)^{|S|}\cdot\prod_{n\in\leftovers{S}{G}}\frac{t_1^n-t_2^n}{t_1-t_2}\right]. &\tag*{\llap{(\text{\cref{prop:other} and \cref{prop:complete}})}}
    \end{align*}
\end{proof}

\cref{thm:multidegree} explicitly states how to compute the multidegree of $J_G$ once we know $\minS{G}$. The obvious goal now is to determine $\minS{G}$. We could calculate $\height{P_S(G)}$ for all $S$, but since $\minS{G}$ is a subset of the power set $\mathcal{P}(V(G))$, this quickly becomes impractical. Here we provide our most relevant results to help find $\minS{G}$ efficiently, starting with some book-keeping notation.

\begin{notation}\label{not:minus}
    For a set $X$ and an element $y$ we denote $X+y := X \cup \set{y}$ and $X-y := X \setminus \set{y}$. Similarly, for a graph $G$ and vertex $v$ we denote $G-v := G[V(G)-v]$.
\end{notation}

Our first result explains how \textit{cut vertices} (vertices $v$ for which $\components{G-v} > \components{G}$) affect the height of prime ideals corresponding to the relevant subsets.

\begin{remark}\label{rem:cut vertex}
    Let $G$ be a graph,  $S \subseteq V(G)$, and $v \in V(G) \setminus S$. From \cite[Lemma 3.1]{herzog} we get the following:
    \begin{enumerate}[label=\alph*)]
        \item If $v$ is not a cut vertex of $G[\complement{S}]$ then $\height{P_{S+v}(G)} > \height{P_S(G)}$.
        \item If $v$ is a cut vertex $G[\complement{S}]$ which cuts a component of $G[\complement{S}]$ into exactly two connected components then $\height{P_{S+v}(G)} = \height{P_S(G)}$.
        \item If $v$ cuts a component of $G[\complement{S}]$ into three or more connected components then $\height{P_{S+v}(G)} < \height{P_S(G)}$.
    \end{enumerate}
\end{remark}

Next, recall that a \textit{simplicial vertex} of a graph is a vertex whose open neighborhood forms a clique, i.e., whose neighbors are all adjacent. This includes vertices of degree 1, called \textit{leaves}. We prove that simplicial vertices can be ignored when finding $\minS{G}$.

\begin{proposition}\label{prop:simplicial}
    Let $G$ be a graph. For all $S\in\minS{G}$, $S$ contains no simplicial vertices.
\end{proposition}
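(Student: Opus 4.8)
The plan is to argue by contradiction. Recall from the discussion following \cref{prop:only min heights} that $\minS{G}$ consists exactly of the subsets $S$ whose prime ideal $P_S(G)$ has minimum height, equivalently those minimizing $|S| - \components{\complement{S}}$. So suppose toward a contradiction that some $S \in \minS{G}$ contains a simplicial vertex $v$, and set $S' := S - v$. I will show that $\height{P_{S'}(G)} < \height{P_S(G)}$, contradicting minimality.

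The main tool is \cref{rem:cut vertex}, applied to the set $S'$ and the vertex $v \in V(G) \setminus S'$ (so that $S' + v = S$). To invoke case (a) of that remark --- which yields $\height{P_{S'+v}(G)} > \height{P_{S'}(G)}$ --- I must check that $v$ is not a cut vertex of the induced subgraph $G[\complement{S'}]$. This reduces to two elementary observations. First, $v$ is still simplicial in $G[\complement{S'}]$: its neighborhood there is $N_G(v) \cap \complement{S'}$, a subset of the clique $N_G(v)$, hence again a clique. Second, a simplicial vertex is never a cut vertex, since all of its neighbors lie in a single clique and therefore remain mutually connected after $v$ is deleted; thus deleting $v$ cannot increase the number of connected components.

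With these two facts in hand, $v$ is a non-cut vertex of $G[\complement{S'}]$ (in the degenerate case $N_G(v) \cap \complement{S'} = \varnothing$ it is isolated, which is likewise not a cut vertex), so case (a) of \cref{rem:cut vertex} applies and gives $\height{P_S(G)} = \height{P_{S'+v}(G)} > \height{P_{S'}(G)}$. This contradicts $S \in \minS{G}$, and we conclude that no member of $\minS{G}$ contains a simplicial vertex.

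The step demanding the most care is the claim that a simplicial vertex cannot be a cut vertex, together with the attendant bookkeeping of how $\components{\complement{S}}$ behaves when $v$ is reinserted into $\complement{S'}$ --- in particular the isolated-vertex corner case. Should one wish to bypass \cref{rem:cut vertex} entirely, the same conclusion drops out of the height formula directly: since $v$'s neighbors in $\complement{S'}$ form a clique, reinserting $v$ either merges into the unique component meeting $N_G(v)$ or creates a single new isolated component, so $\components{\complement{S'}} \geq \components{\complement{S}}$; combined with $|S'| = |S| - 1$ this gives $|S'| - \components{\complement{S'}} \leq |S| - \components{\complement{S}} - 1 < \minh$, the desired contradiction.
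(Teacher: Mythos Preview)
Your proof is correct and takes essentially the same approach as the paper: both reduce to the observation that a simplicial vertex $v$ remains simplicial---and hence is not a cut vertex---in any induced subgraph $G[\complement{S'}]$ containing it, and then invoke case~(a) of \cref{rem:cut vertex}. The paper phrases it as showing $S+v\notin\minS{G}$ for every $S$ with $v\notin S$, whereas you argue by contradiction from $S$ to $S'=S-v$, but these are the same argument.
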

\begin{proof}
    Let $G$ be a graph with simplicial vertex $v$ and let $v \not\in S \subseteq V(G)$. We will show $S+v\notin\minS{G}$. Since $v$ is a simplicial vertex, its neighbors in $G[\complement{S}]$ are adjacent (regardless of $S$). This means $v$ is not a cut vertex of $G[\complement{S}]$. By \cref{rem:cut vertex}, $\height{P_{S+v}(G)} > \height{P_S(G)}$. We have found a subset of vertices whose prime ideal has height less than that of $S+v$, thus $S+v\notin\minS{G}$.
\end{proof}

The last result we present here requires $S \in \minS{G}$ to be a \textit{separating set}, a vertex subset of a (connected) graph $G$ whose removal disconnects $G$.

\begin{proposition}\label{prop:separating}
    Let $G$ be a graph. For all nonempty $S\in\minS{G}$, $S$ is a separating set of $G$.
\end{proposition}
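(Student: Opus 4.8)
The plan is to compare $S$ against the empty set in the minimization that defines $\minh$, and then read off the number of components of $G[\complement{S}]$ directly from the resulting inequality. Since we assume throughout (see \cref{rem:only connected}) that $G$ is connected, the empty set is a legitimate competitor in the minimization, and for it we have $G[\complement{\emptyset}] = G$, which is connected, so $|\emptyset| - \components{\complement{\emptyset}} = 0 - 1 = -1$. Taking the minimum over all subsets therefore forces $\minh \leq -1$.

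With this bound in hand, the rest is immediate. Let $S \in \minS{G}$ be nonempty. By the defining property of $\minS{G}$ we have $|S| - \components{\complement{S}} = \minh \leq -1$, which rearranges to $\components{\complement{S}} \geq |S| + 1$. As $S$ is nonempty we have $|S| \geq 1$, so $\components{\complement{S}} \geq 2$. Thus $G[\complement{S}]$ has at least two connected components, i.e., deleting $S$ disconnects $G$, which is exactly the assertion that $S$ is a separating set.

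I expect no serious obstacle here; the entire content is the observation that testing against $\emptyset$ already pushes $\minh$ below zero, after which the chain $\components{\complement{S}} \geq |S| + 1 \geq 2$ closes the argument. The only point worth a remark is the degenerate case $S = V(G)$, for which $\complement{S} = \emptyset$ and $\components{\complement{S}} = 0$; this is automatically excluded, since such an $S$ would give $|S| - \components{\complement{S}} = n > -1 \geq \minh$ and hence cannot lie in $\minS{G}$. Equivalently, the conclusion $\components{\complement{S}} \geq 2$ that we derive already guarantees $\complement{S} \neq \emptyset$, so no empty remainder can occur among the nonempty minimizers.
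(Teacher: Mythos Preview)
Your proof is correct and follows essentially the same argument as the paper's: compare $S$ against $\varnothing$, obtain $|S|-\components{\complement{S}}\leq -1$, and conclude $\components{\complement{S}}\geq |S|+1\geq 2$. The only cosmetic difference is that the paper phrases the comparison via the heights $\height{P_S(G)}=|S|+n-\components{\complement{S}}$ from \cite[Lemma~3.1]{herzog}, whereas you work directly with the quantity $|S|-\components{\complement{S}}$; these differ by the constant $n$, so the two arguments are line-for-line equivalent.
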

\begin{proof}
    Let $G$ be a graph of order $n$ and suppose $S\in\minS{G}$ is nonempty. For all $T \subseteq V(G)$ we have $\height{P_S(G)} \leq \height{P_T(G)}$, so in particular $\height{P_S(G)} \leq \height{P_{\varnothing}(G)}$. By \cite[Lemma 3.1]{herzog},
    \begin{align*}
        |S|+n-c(\complement{S}) &\leq n-1 \\
        |S|+c(\complement{S}) &\leq -1 \\
        |S|+1 &\leq c(\complement{S}).
    \end{align*}
    Since $S$ is nonempty, $|S| \geq 1$. This implies $2 \leq c(\complement{S})$, thus $S$ is a separating set of $G$.
\end{proof}

\section{Example Applications}\label{sec:examples}
To demonstrate the usefulness of \cref{thm:multidegree}, we take this section to work through some examples involving basic families of graphs. First, a bit of notation.

\begin{notation}\label{not:copies}
    Recall that $\leftovers{S}{G}$ records the sizes of the connected components of $G[\complement{S}]$. When describing $\leftovers{S}{G}$ in these examples, we will use the convenient notation of $n:x$ to denote $n$ copies of $x$.
\end{notation}

The first example we consider is a star graph, which gives a great first look at our proposed process for calculating multidegree, as well as showing off \cref{prop:simplicial} for reductions in cases with simplicial vertices.

\begin{example}[Star Graph]
    Let $S_n$ be a \textit{star graph} of order $n$, so it has $n-1$ leaves and one central vertex which we will call $v_0$ (as in \cref{fig1}). We assume $n>3$, since when $n \leq 3$, $S_n$ is a path graph  and should be treated as such; see \cref{prop:path_graphs}.   The first step in finding the multidegree is determining $\minS{S_n}$. By \cref{prop:simplicial} we know $\minS{S_n}$ does not include subsets of vertices that contain a leaf vertex. This rules out all but the two subsets $\varnothing$ and $\set{v_0}$. By \cite[Lemma 3.1]{herzog}, $\height{P_{\varnothing}(G)}=0+n-1=n-1$ and $\height{P_{\set{v_0}}(G)}=1+n-(n-1)=2$. Since $n>3$, we can see that $P_{\set{v_0}}(G)$ has a lower height, so $\minS{S_n} = \set{\set{v_0}}$. We have $\leftovers{\set{v_0}}{S_n}=\mset{n-1:1}$, thus by \cref{thm:multidegree},
      $$\cmg{S_n} = (t_1t_2)^1\cdot\left(\frac{t_1^1-t_2^1}{t_1-t_2}\right)^{n-1} = t_1t_2.$$
\end{example}

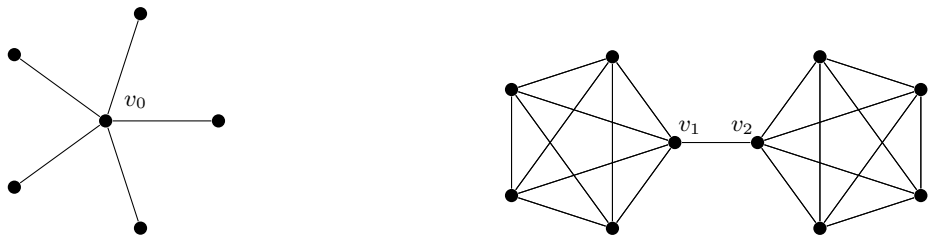
\begin{figure}[ht]
    \centering
    \mbox{
        \subfigure{
    	\begin{tikzpicture} 
                [node/.style={circle,fill=black, inner sep=0pt,minimum size=5pt},label/.style={circle, inner sep=0pt,minimum size=5pt}]
    		\node at (360:0mm) [node] (center) {};
    		\foreach \n in {1,...,5}{
    			\node at ({\n*360/5}:1.5cm) [node] (n\n) {};
    			\draw (center) -- (n\n);
    		}
    		\begin{scriptsize}
    			\draw (0.4,0.25) node {$v_0$};
    		\end{scriptsize}
    	\end{tikzpicture}
        }
        \qquad\qquad\qquad\qquad
        \subfigure{
    	\begin{tikzpicture} 
            [node/.style={circle,fill=black, inner sep=0pt,minimum size=5pt},label/.style={circle, inner sep=0pt,minimum size=5pt}]
            \foreach \i in {1,...,5}{
                \node at ({(\i-1)*360/5}:1.2cm) [node] (n\i) {};
                \node at ($({(\i-1)*360/5+180}:1.2cm)+(3.5,0)$) [node] (m\i) {};
            }
            \foreach \i in {1,...,5}{
                \foreach \j in {1,...,5}{
                    \ifnum\i=\j
                    \else
                        \draw (n\i) -- (n\j);
                        \draw (m\i) -- (m\j);
                    \fi
                }
            }
            \draw (n1)--(m1);
            \begin{scriptsize}
                \draw (1.4,0.2) node {$v_1$};
                \draw (2.1,0.2) node {$v_2$};
            \end{scriptsize}
            \end{tikzpicture}
        }
    }
    \caption{$S_6$ (Left) and $B_5$ (Right)} \label{fig1}
\end{figure}

Now we take a look at barbell graphs to prove that their multidegrees turn out to be quite interesting: their coefficients are consecutive odd numbers.

\begin{example}[Barbell Graph]
    We will use the notation of $B_n$ to denote the \textit{barbell graph} constructed by taking two copies of $K_n$ and connecting a vertex $v_1$ in one to a vertex $v_2$ in the other (such as in \cref{fig1}). \cref{prop:simplicial} rules out all simplicial vertices, yielding $\minS{B_n} \subseteq \mathcal{P}(\set{v_1,v_2})$. Doing the height calculations for the four possible elements reveals that $\minS{B_n} = \set{\varnothing,\set{v_1},\set{v_2}}$.
    
    We find that $\leftovers{\varnothing}{B_n}=\mset{2n}$ and $\leftovers{\set{v_1}}{B_n}=\leftovers{\set{v_2}}{B_n}=\mset{n,n-1}$. Then by \cref{thm:multidegree},
    
    \begingroup
        \allowdisplaybreaks
        \begin{align*}
            \cmg{B_n} &= (t_1t_2)^0\cdot\frac{t_1^{2n}-t_2^{2n}}{t_1-t_2} + 2\cdot\left[(t_1t_2)^1\cdot\frac{t_1^n-t_2^n}{t_1-t_2}\cdot\frac{t_1^{n-1}-t_2^{n-1}}{t_1-t_2}\right] \\
            &= \frac{t_1^{2n}-t_2^{2n}}{t_1-t_2} + 2t_1t_2\cdot\frac{t_1^n-t_2^n}{t_1-t_2}\cdot\frac{t_1^{n-1}-t_2^{n-1}}{t_1-t_2} \\
            &= \sum_{\substack{i+j=2n-1 \\ i,j \geq 0}}t_1^it_2^j + 2t_1t_2\cdot\sum_{\substack{i+j=n-1 \\ i,j \geq 0}}t_1^it_2^j\cdot\sum_{\substack{i+j=n-2 \\ i,j \geq 0}}t_1^it_2^j \\
            &= \sum_{\substack{i+j=2n-1 \\ i,j \geq 0}}t_1^it_2^j + 2t_1t_2\cdot\sum_{\substack{i+j=2n-3 \\ i,j \geq 0}}(\min\set{i,j}+1)t_1^it_2^j \\
            &= \sum_{\substack{i+j=2n-1 \\ i,j \geq 0}}t_1^it_2^j + \sum_{\substack{i+j=2n-1 \\ i,j \geq 0}}2\min\set{i,j}t_1^it_2^j \\
            &= \sum_{\substack{i+j=2n-1 \\ i,j \geq 0}}(1+2\min\set{i,j})t_1^it_2^j.
        \end{align*}
    \endgroup
    Thus the coefficients of $\cmg{B_n}$ are consecutive odd numbers!
\end{example}

Next we introduce a new family of graphs called the \textit{horned complete graphs}, which have a particularly interesting multidegree. They also illustrate \cref{rem:cut vertex}.

\begin{example}[Horned Complete Graph]\label{ex:horned}
    Let $\check{K}_n$ be a \textit{horned complete graph}, which we define as the complete graph $K_n$ on vertices $U:=\set{v_1,\ldots,v_n}$ with two leaves (`horns') attached to each (see \cref{fig2}). By \cref{prop:simplicial}, we know $\minS{\check{K}_n}$ will not include subsets that contain a `horn' vertex, which narrows it down to $\minS{\check{K}_n} \subseteq \mathcal{P}(\set{v_1,\ldots,v_n})$. To determine it exactly, first note that $v_1$ cuts the graph into three connected components, so by \cref{rem:cut vertex}, $\height{P_{\set{v_1}}(\check{K}_n)} < \height{P_{\varnothing}(\check{K}_n)}$. Now in $\check{K}_n - v_1$, we once again have a vertex $v_2$ that cuts one component into three, so $\height{P_{\set{v_1,v_2}}(\check{K}_n)} < \height{P_{\set{v_1}}(\check{K}_n)}$. This pattern continues until $\check{K}_n - \set{v_1,\ldots,v_{n-1}}$, in which $v_n$ cuts one component into two, so $\height{P_{\set{v_1,\ldots,v_n}}(\check{K}_n)} = \height{P_{\set{v_1,\ldots,v_{n-1}}}(\check{K}_n)}$. Because of the graph's symmetry, we can repeat the argument above for all $v_i \in U$ and conclude $\minS{\check{K}_n} = \set{U-v_i:i\in[n]} \cup \set{U}$.

    See that $\leftovers{U-v_i}{\check{K}_n}=\mset{2n-2:1,3}$ for $i\in[n]$ and $\leftovers{U}{\check{K}_n}=\mset{2n:1}$, so by \cref{thm:multidegree},
    \begin{align*}
        \cmg{\check{K}_n} &= n\cdot\left[(t_1t_2)^{n-1}\cdot\left(\frac{t_1^1-t_2^1}{t_1-t_2}\right)^{2n-2}\cdot\frac{t_1^3-t_2^3}{t_1-t_2}\right] + (t_1t_2)^n\cdot\left(\frac{t_1^1-t_2^1}{t_1-t_2}\right)^{2n} \\
        &= n\cdot\left[(t_1t_2)^{n-1}\cdot\frac{t_1^3-t_2^3}{t_1-t_2}\right] + (t_1t_2)^n \\
        &= n(t_1t_2)^{n-1}(t_1^2+t_1t_2+t_2^2) + t_1^nt_2^n \\
        &= n(t_1^{n+1}t_2^{n-1}+t_1^nt_2^n+t_1^{n-1}t_2^{n+1}) + t_1^nt_2^n \\
        &= nt_1^{n+1}t_2^{n-1}+(n+1)t_1^nt_2^n+nt_1^{n-1}t_2^{n+1}.
    \end{align*}
    Note that the leading coefficient of $\cmg{\check{K}_n}$ is $n$ rather than a constant, so it scales with the order of the graph. Based on our data collection in \cref{sec:conclusion}, this is a particularly rare property for the multidegrees of these ideals to have.
\end{example}

\begin{figure}[ht]
    \centering
    \mbox{
        \subfigure{
            \begin{tikzpicture} 
                [node/.style={circle,fill=black, inner sep=0pt,minimum size=5pt},label/.style={circle, inner sep=0pt,minimum size=5pt}]
         	\foreach \i in {1,...,4}{
         		\node at ({(\i-1)*360/4+45}:1cm) [node] (v\i) {};
         		\node at ({(\i-1)*360/4+65}:1.8cm) [node] (n\i) {};
         		\node at ({(\i-1)*360/4+25}:1.8cm) [node] (m\i) {};
         		\begin{scriptsize}
         			\draw ({(\i-1)*360/4+20}:1.1cm) node {$v_{\i}$};
         		\end{scriptsize}
         	}
         	\foreach \i in {1,...,4}{
         		\draw (v\i) -- (n\i);
         		\draw (v\i) -- (m\i);
         		\foreach \j in {1,...,4}{
         			\ifnum\i=\j
         			\else
         			\draw (v\i) -- (v\j);
         			\fi
         		}
         	}
            \end{tikzpicture}
        }
        \qquad\qquad\qquad\qquad
        \subfigure{
            \begin{tikzpicture} 
                [node/.style={circle,fill=black, inner sep=0pt,minimum size=5pt},label/.style={circle, inner sep=0pt,minimum size=5pt}]
                \foreach \i in {1,...,5}{
                    \node at ({(\i-1)*360/5}:1.2cm) [node] (n\i) {};
                }
                \foreach \i in {1,...,4}{
                    \draw (n\i) -- (n\the\numexpr\i+1\relax);
                }
                \draw (n5) -- (n1);
                \begin{scriptsize}
                    \draw (1.4,0.2) node {$v$};
                \end{scriptsize}
            \end{tikzpicture}
        }
    }
    \caption{$\check{K}_4$ (Left) and $C_5$ (Right)} \label{fig2}
\end{figure}
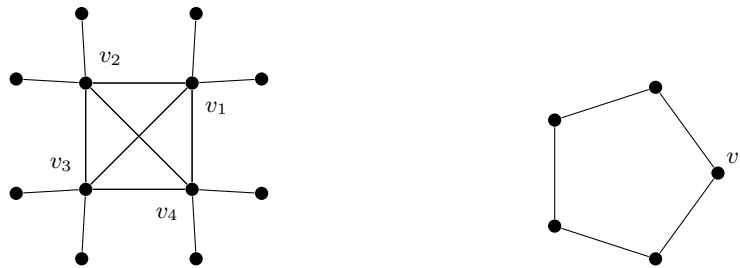

Next we get the multidegree of the binomial edge ideal of a cycle graph, which demonstrates the use of familiar subgraphs to determine $\minS{G}$.

\begin{example}[Cycle Graph]
    Let $C_n$ be a \textit{cycle graph} on $n$ vertices and let $v$ be an arbitrary vertex (as in \cref{fig2}). We have that $C_n-v$ is just $P_{n-1}$, a path graph with $n-1$ vertices;  see \cref{prop:path_graphs}. Note that for all $S \subset V(P_{n-1})$, $v \not\in S$ is either not a cut vertex of $G[\complement{S}]$ or cuts a component into exactly two components, so it follows from \cref{rem:cut vertex} that $\varnothing \in \minS{P_{n-1}}$. Hence $\varnothing \in \minS{C_n-v}$, so by definition, for all $S \subseteq V(C_n-v)$ we have
    $\height{P_S(C_n-v)} \geq \height{P_\varnothing(C_n-v)}$. Equivalently, for all $S \subseteq V(C_n)$ with $v \in S$ we have
    $\height{P_S(C_n)} \geq \height{P_{\set{v}}(C_n)}$. Notice $v$ is not a cut vertex of $C_n$, so by \cref{rem:cut vertex}, 
    $\height{P_{\set{v}}(C_n)} > \height{P_\varnothing(C_n)}$. Thus for all nonempty $S \subseteq V(C_n)$ we have 
    $\height{P_S(C_n)} > \height{P_{\varnothing}(C_n)}$. Hence $\minS{C_n} = \set{\varnothing}$.

    By \cref{thm:multidegree}, the multidegree is
    $$\cmg{C_n} = (t_1t_2)^0\cdot\frac{t_1^n-t_2^n}{t_1-t_2} = \frac{t_1^n-t_2^n}{t_1-t_2}.$$
\end{example}

Our second to last example is the wheel graph, which uses \cref{prop:separating}.

\begin{example}[Wheel Graph]
    Let $W_n$ be a \textit{wheel graph} of order $n$, which is a cycle on $n-1$ vertices with each vertex connected to a central vertex $v_0$ (such as in \cref{fig3}). Suppose $S \in V(G)$ is nonempty.
            
    (Case 1) Suppose $v_0 \in S$. Note $v_0$ is not a cut vertex of $W_n$, so by \cref{rem:cut vertex}, $\height{P_{\set{v_0}}(W_n)} > \height{P_{\varnothing}(W_n)}$. Also, $W_n-v_0$ is isomorphic to $C_{n-1}$ which has $\minS{C_{n-1}}=\set{\varnothing}$. By the same reasoning as the previous example, $\height{P_{S}(W_n)} > \height{P_{\varnothing}(W_n)}$, thus $S\notin\minS{W_n}$.
            
    (Case 2) Suppose $v_0 \notin S$. Then $S$ cannot be a separating set, so by \cref{prop:separating}, $S\notin\minS{W_n}$.
            
    In both cases $S\notin\minS{W_n}$, thus $\minS{W_n}=\set{\varnothing}$.
        
    Recall that $\varnothing{}$ was also the only element of $\minS{C_n}$. This makes the computation of their multidegrees quite simple, and the fact that $C_n$ and $W_n$ are both connected graphs with the same size means they have the same multidegree.
    $$\cmg{W_n} = (t_1t_2)^0\cdot\frac{t_1^n-t_2^n}{t_1-t_2} = \frac{t_1^n-t_2^n}{t_1-t_2}.$$
\end{example}

\begin{figure}[ht]
    \centering
    \mbox{
        \subfigure{
    	\begin{tikzpicture} 
            [node/.style={circle,fill=black, inner sep=0pt,minimum size=5pt},label/.style={circle, inner sep=0pt,minimum size=5pt}]
            \node at (360:0mm) [node] (center) {};
            \foreach \i in {1,...,5}{
                \node at ({(\i-1)*360/5}:1.2cm) [node] (n\i) {};
            }
            \foreach \i in {1,...,4}{
                \draw (center) -- (n\i);
                \draw (n\i) -- (n\the\numexpr\i+1\relax);
            }
            \draw (n5) -- (n1);
            \draw (center) -- (n5);
            \begin{scriptsize}
                \draw (0.4,0.25) node {$v_0$};
            \end{scriptsize}
            \end{tikzpicture}
        }
        \qquad\qquad\qquad\qquad
        \subfigure{
            \begin{tikzpicture} 
            [node/.style={circle,fill=black, inner sep=0pt,minimum size=5pt},label/.style={circle, inner sep=0pt,minimum size=5pt}]
            \node at (360:0mm) [node] (center) {};
            \foreach \i in {1,...,4}{
                \node at ({(\i-1)*360/4+20}:1.2cm) [node] (n\i) {};
                \node at ({(\i-1)*360/4-20}:1.2cm) [node] (m\i) {};
            }
            \foreach \i in {1,...,4}{
                \draw (center) -- (n\i);
                \draw (center) -- (m\i);
                \draw (n\i) -- (m\i);
            }
            \begin{scriptsize}
                \draw (0.4,0.35) node {$v_0$};
            \end{scriptsize}
            \end{tikzpicture}
        }
    }
    \caption{$W_6$ (Left) and $F_4 \cong W_{4:3}$ (Right)} \label{fig3}
\end{figure}
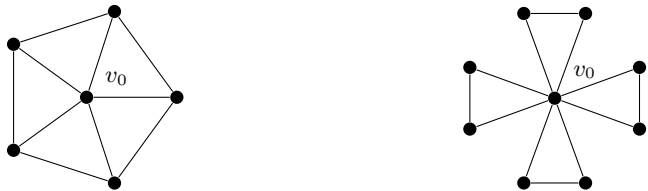

Finally, we demonstrate \cref{prop:simplicial} by computing the multidegree of the binomial edge ideals of friendship graphs and then briefly explain how the strategy can generalize to windmill graphs.

\begin{example}[Friendship Graph]
    Let $F_n$ be the \textit{friendship graph} constructed by connecting $n$ copies of $K_3$ at a common vertex $v_0$, as in \cref{fig3}. By \cref{prop:simplicial}, $\minS{F_n} \subseteq \mathcal{P}(\set{v_0})$. By \cref{rem:cut vertex} we see that $\height{P_{\set{v_0}}(F_n)} < \height{P_{\varnothing}(F_n)}$, so $\minS{F_n} = \set{\set{v_0}}$.
    
    We compute $\leftovers{\set{v_0}}{F_n} = \mset{n:2}$. By \cref{thm:multidegree}, 
    \begin{align*}
        \cmg{F_n} &= (t_1t_2)^1\cdot\left(\frac{t_1^2-t_2^2}{t_1-t_2}\right)^n \\
        &= t_1t_2\cdot(t_1+t_2)^n \\
        &= t_1t_2\cdot\sum_{\substack{i+j=n \\ i,j \geq 0}}\binom{n}{i}t_1^it_2^j \\
        &= \sum_{\substack{i+j=n \\ i,j \geq 0}}\binom{n}{i}t_1^{i+1}t_2^{j+1}.
    \end{align*}
    This strategy works equally well for \textit{windmill graphs} $W_{n,m}$, a generalization of friendship graphs that combine $m$ copies of $K_n$. The only difference in calculations is that $\leftovers{\set{v_0}}{W_{n,m}} = \mset{m:n-1}$, so the algebra requires the multinomial theorem instead of the binomial theorem.
\end{example}

Let $P_n$ denote a {\it path graph} with $n$ vertices, a graph whose edges are $\{i,i+1\}$ for $1\leq i<n$. We were not able to obtain a satisfying closed formula for the multidegree of path graphs using our main theorem. However, since $J_{P_n}$ is generated by a regular sequence, in the following proposition we are able to make this  computation using alternative methods.

\begin{proposition}\label{prop:path_graphs}
    For every positive integer $n$ we have 
    $$\cmg{P_n} = \sum_{\substack{i+j=n-1 \\ i,j \geq 0}}\binom{n-1}{i}t_1^{i}t_2^{j}.$$
\end{proposition}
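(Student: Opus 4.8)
The plan is to exploit the complete-intersection structure already flagged before the statement. The ideal $J_{P_n}$ is generated by the $n-1$ binomials $f_i := x_i y_{i+1} - x_{i+1} y_i$ for $1 \leq i \leq n-1$, which form a regular sequence in $T$; hence the Koszul complex on $f_1,\ldots,f_{n-1}$ is a minimal $\ZZ^2$-graded free resolution of $T/J_{P_n}$. First I would record that, under the grading with $x_i = (1,0)$ and $y_i = (0,1)$, each $f_i$ is homogeneous of degree $(1,1)$, since every monomial appearing in it is a product of one $x$-variable and one $y$-variable. The standard $K$-polynomial computation from a Koszul resolution then expresses $\mathcal{K}(T/J_{P_n};t_1,t_2)$ as the product of the factors $1 - t_1^{a_i}t_2^{b_i}$ over the generators, where $(a_i,b_i) = \deg f_i$. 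Since every generator has degree $(1,1)$, this gives
$$\mathcal{K}(T/J_{P_n};t_1,t_2) = \prod_{i=1}^{n-1}\bigl(1 - t_1 t_2\bigr) = (1 - t_1 t_2)^{n-1}.$$

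Next, following the definition of the multidegree, I would substitute $t_1 \mapsto 1 - t_1$ and $t_2 \mapsto 1 - t_2$. Using $1 - (1-t_1)(1-t_2) = t_1 + t_2 - t_1 t_2$, this yields
$$\mathcal{K}(T/J_{P_n};1 - t_1, 1 - t_2) = (t_1 + t_2 - t_1 t_2)^{n-1}.$$
Because $f_1,\ldots,f_{n-1}$ is a regular sequence, $\operatorname{codim}(T/J_{P_n}) = n-1$, so the multidegree is the sum of the total-degree-$(n-1)$ terms of this polynomial. In any expansion, each of the $n-1$ factors contributes $t_1$ or $t_2$ (each of degree $1$) or $-t_1 t_2$ (of degree $2$), so the total degree equals $n-1$ exactly when no factor contributes the $t_1 t_2$ term. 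Hence the lowest-degree part is precisely $(t_1 + t_2)^{n-1}$, and the binomial theorem gives
$$\cmg{P_n} = (t_1 + t_2)^{n-1} = \sum_{\substack{i+j=n-1 \\ i,j \geq 0}}\binom{n-1}{i}t_1^{i}t_2^{j},$$
as claimed.

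The routine parts here are the substitution and the binomial expansion; the step requiring genuine care is the input that $f_1,\ldots,f_{n-1}$ is a regular sequence (equivalently, that $T/J_{P_n}$ is a complete intersection of codimension $n-1$), together with the product formula for the $K$-polynomial that this enables. I would either cite the Koszul-complex computation of the $K$-polynomial of a complete intersection directly, or give a brief argument that the adjacent $2$-minors $x_i y_{i+1} - x_{i+1} y_i$ form a regular sequence — for instance by checking that their heights add up correctly or by an explicit term-order/initial-ideal argument. Once that foundation is in place, everything else follows by the elementary degree bookkeeping above.
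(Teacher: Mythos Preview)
Your proposal is correct and follows essentially the same approach as the paper: both use that $f_1,\ldots,f_{n-1}$ is a regular sequence of bidegree $(1,1)$ elements to obtain the $K$-polynomial $(1-t_1t_2)^{n-1}$, and then extract $(t_1+t_2)^{n-1}$ as the lowest-degree part after the substitution $t_i\mapsto 1-t_i$. The only cosmetic difference is that the paper reaches the $K$-polynomial via the iterated short exact sequences $0\to T/(f_1,\ldots,f_{i-1})[(-1,-1)]\xrightarrow{f_i}T/(f_1,\ldots,f_{i-1})\to T/(f_1,\ldots,f_i)\to 0$ and the resulting Hilbert-series recursion, whereas you package the same computation via the Koszul resolution; these are equivalent formulations of the same argument.
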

\begin{proof}
    Let $f_i = x_iy_{i+1}-x_{i+1}y_i$ for $1\leq i< n$ be the generators of $J_{P_n}$. Notice that each $f_i$ is homogeneous of  degree $(1,1)$ and that $f_1,\ldots, f_{n-1}$ form a  regular sequence. Therefore we have graded short exact sequences
    $$0\to T/(f_1,\ldots, f_{i-1})[(-1,-1)] \xrightarrow{f_i}  T/(f_1,\ldots, f_{i-1}) 
    \to T/(f_1,\ldots, f_{i}) 
    \to 0,$$
    for each $i$, where $(f_1,\ldots, f_{i-1})$ is the zero ideal if $i=1$. Thus 
    $$H\Big(T/(f_1,\ldots, f_{i}); t_1,t_2\Big)
        = (1-t_1t_2)H\Big(T/(f_1,\ldots, f_{i-1}); t_1,t_2\Big)$$
    for each $i$, and so
    $$ H\Big(R(P_n); t_1,t_2\Big)
        =  H\Big(T/(f_1,\ldots, f_{n-1}); t_1,t_2\Big)
        = \frac{(1-t_1t_2)^{n-1}}{(1-t_1)^n(1-t_2)^n}.$$
    Hence $\cmg{P_n}= (t_1+t_2)^{n-1}$ as desired.
\end{proof}

\section{Concluding Remarks}\label{sec:conclusion}
Our studies relied extensively on the computer software \textit{Macaulay2} \cite{M2} because of its efficiency at algebraic geometry computations \footnote{The Macaulay2 code used for these calculations can be found \href{https://github.com/MrBobJrIV/Multidegrees-of-Binomial-Edge-Ideals}{here}: https://github.com/MrBobJrIV/
Multidegrees-of-Binomial-Edge-Ideals.}. Of special importance were the Graphs, EdgeIdeals, and Nauty packages for generating and studying graphs, as well as the Visualize package for convenient viewing of graphs.

One of Macaulay2's most useful contributions was to investigate the proportion of graphs whose multidegrees had certain interesting properties. For example, a surprisingly large portion of graphs have multiplicity-free multidegrees, meaning all coefficients are at most 1. We also examined the leading coefficients of the multidegrees. Almost all graphs we checked have a leading coefficient of 1, which is what motivated us to search for and discover the horned complete graphs in \cref{ex:horned}, proving the existence of a graph whose multidegree has leading coefficient equal to any positive integer $n$. It is still unclear to us what properties of a graph may cause these interesting multidegrees, and we expect to investigate this question in future research. Below is the table of frequencies we collected.

\begin{center}
    \begin{tabular}{cccccccccc}\toprule
        Vertices & 1 & 2 & 3 & 4 & 5 & 6 & 7 & 8 & 9 \\
        \midrule
        Simple connected graphs & 1 & 1 & 2 & 6 & 21 & 112 & 853 & 11,117 & 261,080 \\
        Multiplicity-free & 1 & 1 & 1 & 4 & 11 & 60 & 456 & 6,676 & 183,838 \\
        Leading coefficient of 2 & 0 & 0 & 0 & 0 & 0 & 1 & 2 & 24 & 203 \\
        Leading coefficient of 3 & 0 & 0 & 0 & 0 & 0 & 0 & 0 & 0 & 3 \\
        \bottomrule
    \end{tabular}
\end{center}

Notice that the table only goes up to graphs of order 9. This is because the number of calculations needed to study each graph is immense. One improvement we achieved comes from the following proposition, which effectively halves the number of subsets needed to be checked when finding $\minS{G}$.

\begin{proposition}\label{prop:size}
    Let $G$ be a graph. For all $S\in\minS{G}$, $|S| \leq \floor{\frac{n-1}{2}}$.
\end{proposition}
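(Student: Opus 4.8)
The plan is to derive the bound from two elementary inequalities: one expressing that the empty set is always a competitor in the minimization defining $\minh$, and one expressing the trivial fact that a graph has at most as many connected components as vertices.

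First I would record what the choice $S = \varnothing$ contributes. Since $G$ is connected, $c(\complement{\varnothing}) = c(G) = 1$, so $|\varnothing| - c(\complement{\varnothing}) = -1$. Because $\minh$ is the minimum of $|S| - c(\complement{S})$ taken over all subsets $S \subseteq V(G)$, this forces $\minh \leq -1$. Hence for every $S \in \minS{G}$ we have $|S| - c(\complement{S}) = \minh \leq -1$, which rearranges to $c(\complement{S}) \geq |S| + 1$. This is exactly the inequality already extracted in the proof of \cref{prop:separating}, so I can invoke it directly rather than rederive it.

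Next I would bound $c(\complement{S})$ from above. The induced subgraph $G[\complement{S}]$ has exactly $n - |S|$ vertices, and each of its connected components contains at least one vertex, so $c(\complement{S}) \leq n - |S|$. Chaining this with the previous inequality yields
$$n - |S| \geq c(\complement{S}) \geq |S| + 1,$$
whence $n - 1 \geq 2|S|$, that is, $|S| \leq \tfrac{n-1}{2}$. Since $|S|$ is an integer, I may pass to the floor to conclude $|S| \leq \floor{\frac{n-1}{2}}$, as claimed. Note that this single chain handles $S = \varnothing$ as well, so no case split is needed.

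I do not expect a serious obstacle here; the whole argument is a two-line counting estimate once the right comparison between vertex count and component count is set up. The only point demanding a moment of care is the rounding step at the end: one must remember that $|S|$ is an integer in order to strengthen $\tfrac{n-1}{2}$ to $\floor{\frac{n-1}{2}}$, a distinction that is substantive precisely when $n$ is even.
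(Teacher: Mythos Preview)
Your argument is correct and matches the paper's proof essentially line for line: compare $S$ against $\varnothing$ to get $|S|-c(\complement{S})\le -1$, bound $c(\complement{S})\le n-|S|$, combine, and floor. The only cosmetic difference is that the paper phrases the first step via the height formula $\height{P_S(G)}=|S|+n-c(\complement{S})$ from \cite[Lemma~3.1]{herzog} rather than directly via $\minh$.
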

\begin{proof}
    Let $G$ be a graph of order $n$ and let $S\in\minS{G}$. This means for all $T\subseteq V(G)$ we have $\height{P_S(G)} \leq \height {P_T(G)}$. In particular, $\height{P_S(G)} \leq \height{P_{\varnothing}(G)}$. By \cite[Lemma 3.1]{herzog}, $|S|+n-c(\complement{S}) \leq n-1$. Notice that $c(\complement{S})$ is at most $n-|S|$ (the case where all vertices not in $S$ are isolated in $G[\complement{S}]$), so
    \begin{align*}
        |S|+n-(n-|S|) &\leq n-1 \\
        2|S| &\leq n-1 \\
        |S| &\leq \frac{n-1}{2}.
    \end{align*}
    Since $|S|$ must be an integer, we get $|S| \leq \floor{\frac{n-1}{2}}$ as desired.
\end{proof}

Despite this improvement, our machines could not finish the computations for graphs of order 10 after a week of nonstop computing. 

Another area that could be explored further is generalizations of some of the graphs presented in \cref{sec:examples}. For example, it is very natural to consider barbell graphs with more than two `bells', or bells of different sizes. Our methods easily provide the multidegrees of these graphs in terms of products of polynomials, but despite the approachable appearance of these products, we could not find satisfying simplifications or acquire exact expressions for the final coefficients.

\section*{Acknowledgements}\label{sec:thanks}
The authors would like to thank their advisors Jonathan Montaño and Zilin Jiang for their impressive knowledge, invaluable mentorship, and inspiring enthusiasm.

\bibliographystyle{plain}
\bibliography{mbei}
\end{document}